\title{ A Solution to an Ambarzumyan Problem on Trees}
\author{C.K.\ Law\thanks{
Department of Applied Mathematics, National Sun Yat-sen
University, Kaohsiung, Taiwan 804,  R.O.C., and National Center
for Theoretical Sciences, Taiwan. Email: law@math.nsysu.edu.tw }
\ and \ E.\ Yanagida\thanks{Department of Mathematics,
Tokyo Institute of Technology,
 Meguro-ku, Tokyo 152-8551, Japan.
Email: yanagida@math.titech.ac.jp}}
\newtheorem{theo}{Theorem}[section]
\newtheorem{lem}[theo]{Lemma}
\newtheorem{rem}[theo]{Remark}
\newcommand{\al}{\alpha}
\newcommand{\ga}{\gamma}
\newcommand{\Ga}{\Gamma}
\newcommand{\la}{\lambda}
\newcommand{\sg}{\sigma}
\newcommand{\bfv}{{\bf{v}}}
\newcommand{\bfO}{{\bf{0}}}
\begin{document}

\numberwithin{equation}{section}

\maketitle

\begin{abstract}
We consider the Neumann Sturm-Liouville problem defined on
trees such that the ratios of lengths of edges are not necessarily
rational.  It is shown that the potential function of the
Sturm-Liouville operator must be zero
 if the spectrum is equal to that for zero potential.
This extends previous results and gives an Ambarzumyan theorem for
the Neumann Sturm-Liouville problem on trees. To prove this, we
compute approximated  eigenvalues for zero potential by using
 a generalized pigeon hole argument, and make use of recursive formulas for
characteristic functions.

 \end{abstract}
 \vskip0.5in
AMS Subject Classification (2000) : 34A55, 34B24.
\newpage

\section{Introduction}
 \hskip0.25in
In this paper we study an eigenvalue problem for a Neumann
Sturm-Liouville operator
 defined on a  metric tree (a connected graph with no cycles)
 $\Gamma=\{ V,E\}$, where $V=\{\bfv_j:\ j=0,\ldots,J\}$ is the set of all  vertices
 and $E=\{\gamma_i:\ i=1,\ldots,I \}$ is the set of all
 edges with lengths $a_i\in (0,\infty)$.
 We let $\partial \Ga$ be the set of all pendant (boundary)
 vertices.
 Any edge connected to a pendant vertex is called
 a boundary edge. For any
 internal vertex $\bfv$, we let $I(\bfv)$ be the set of all
 indices $i$ such that the edge $\ga_i$ is
 connected to $\bfv$. The degree of $\bfv$ is defined as
 $|I(\bfv)|$.
 We also choose an arbitrary internal vertex $\bfv_0$ to
 be the root.  Then we assign local coordinates to the edges such
 that for any edge $\al_i\in E$, the endpoint of $\ga_i$ further from the root
 has  local coordinates $0$,
 while the endpoint closer to the root  has
 local coordinate $a_i$. Thus all pendant vertices have
 local coordinate $0$, while the root $\bfv_0$ has local coordinate
 $a_i$ corresponding to the edge $\ga_i$ connected to it. Thus the Neumann Sturm-Liouville problem can
 be expressed as functions $y_i$'s defined on the edges $\ga_i$'s
 satisfying
 \begin{equation} \label{eq:main}
 -y_i''+q_i (x) y_i=\la y_i\qquad 0<x<a_i, \quad i=1,2,\ldots, I,
 \end{equation}
 where each $q_i$ is the potential function defined on $\ga_i$
 ($i=1,\ldots,I$), and,
 \begin{enumerate}
 \item[(A1)] $y_i'(0)=0$ whenever $\ga_i$ is a boundary edge.
 \end{enumerate}
 \hskip0.25in
  Also continuity and Kirchhoff conditions
 are imposed at each internal vertex.
 At any internal vertex $\bfv$ other than the root, there
 are some incoming edges $\ga_i$'s and one outgoing edge
 $\ga_k$. The local coordinates of $\bfv$ is $0$ with respect to
 $\ga_k$ but $a_i$ with respect to the other $\ga_i$'s.
 \begin{enumerate}
 \item[(A2)] The continuity and Kirchhoff conditions at $\bfv$
 are defined as
 $$
 y_k(0)=y_i(a_i),\qquad \mbox{ and }\qquad y_k'(0)=\sum_{i} y_i'(a_i),
 $$
 wherever $i$ is such that $i\in I(\bfv)$ but $i\neq k$.
 \end{enumerate}
 At the root $\bfv_0$, all the connecting edges are
 incoming. Hence
 \begin{enumerate}
 \item[(A3)] the continuity and Kirchhoff conditions
 are defined as: for any $i,k\in I(\bfv_0)$,
 $$
 y_k(a_k)=y_i(a_i),\qquad \mbox{ and }\qquad \sum_{i\in I(\bfv_0)} y_i'(a_i)=0,
 $$
 \end{enumerate}

The above formulation of  the Sturm-Liouville problem on $\Gamma$ is
essentially the same as in \cite{LP}. Note that the Neumann boundary
conditions (A1) can be viewed as a special case of (A2), for the
degree of a pendant vertex is $1$, hence continuity condition is
empty and right hand side of the Kirchhoff condition vanishes.
It is well-known that the above problem has a discrete spectrum. A
real number $\lambda$ is an eigenvalue of the above problem if it
has a nontrivial solution $(y_1,\ldots,y_I)$, i.e., at least one of
$y_i$'s is nontrivial. We write $Q=(q_1,\ldots, q_{I})$, and let
$\sg(Q)$ be the set of
 eigenvalues  for the vector potential function $Q$.
 In particular, $\sigma(\bfO)$ denotes the set of eigenvalues for $Q=\bfO$, i.e.,
 $q_i \equiv 0$ for all $i=1,\ldots,I$.
 \par
In the simplest case $I=1$, the above problem is reduced to a usual
Sturm-Liouville problem on a finite interval $(0,a_1)$ with the
Neumann boundary condition.  In this case, in 1929, Ambarzumyan
\cite{A29} showed that $\sigma(Q)=\sigma(0)$ implies $Q=0$ almost
everywhere.  This seems to be the first example of an inverse
problem  where the potential function can be determined uniquely by
the spectrum, without any additional data.  Later, Chern and Shen
\cite{CS97} extended the result to vectorial Sturm-Liouville
systems.  In the case of the Dirichlet boundary condition, it was
shown by  Chern et al.~\cite{CLW} that the potential function
 must be identically equal to 0 if an additional condition
\[
 \int_0^{a_1}
q_1(x)\cos ( \frac{2\pi}{a_1} x)\, dx=0
\]
is imposed.
Recently, the Ambarzumyan problem for periodic boundary conditions
was studied by Yang et al.~\cite{Yang}.   They showed that for a
vectorial Sturm-Liouville system of dimension $d$ with the periodic
boundary condition, if the eigenvalues are $(2n\pi)^2$ with
multiplicities $2d$, then the vector potential  must be $0$. Thus the
Ambarzumyan theorem, originally specified for the Neumann boundary
condition, can be generalized to several  Ambarzumyan problems
with different boundary conditions.

The aim of this paper is to study the Ambarzumyan problem for the
Neumann Sturm-Liouville problems defined on trees. In this
direction, Pivovarchik \cite{P} showed that when $\Gamma$ consists
of three edges with equal length and one triple junction, an
 analogue of Ambarzumyan theorem is valid for the  problem
(\ref{eq:main}) with (A1)$\sim$(A3).
Later, Carlson and  Pivovarchik~\cite{CP} extended the result to
any trees such that $\{a_i/L\}$ are all rational numbers, where $L$
is the total length of $\Gamma$ given by
\[
   L := \sum_{i=1}^{I} a_i.
\]
In this case, we  can find  infinitely many  eigenvalues explicitly given by
\begin{equation} \label{explicit}
  \Big( \dfrac{m m_0}{L} \pi \Big)^2  \in \sigma(0), \qquad m=0, 1,2,\ldots
\end{equation}
for some integer $m_0$, which makes the analysis much easier than a
more general case.   As for the Dirichlet problem on a star-shaped
graph, we refer to a recent paper by
 Hung et al \cite{HLS}.  See also
   \cite{IY1,IY2,K2002,K2004,LP,P,Yanagida}
 for related results on
the Sturm-Liouville problems on graphs.

In this paper we consider the
Sturm-Liouville problem on  trees  such that $\{ a_i/L \}$ are not
necessarily rational.  The following theorem is a main
result of this paper, which gives a  solution to the
Ambarzumyan problem on general trees.

\begin{theo} \label{th:Ambar}
For the Neumann Sturm-Liouville operator defined on $\Gamma$,
$\sg(Q)=\sg(\bfO)$ implies $Q=\bfO$ almost everywhere.
\end{theo}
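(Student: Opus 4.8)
The plan is to imitate the architecture of the classical Ambarzumyan theorem, which combines a variational ground-state identity with high-energy eigenvalue asymptotics; on a tree with incommensurable edge lengths the second ingredient is where the real work lies. I would begin by noting that $\la=0$ is the bottom of $\sg(\bfO)$: for $Q=\bfO$ the quadratic form $\sum_i\int_0^{a_i}(y_i')^2\,dx$ associated with \eqref{eq:main} and (A1)--(A3) is nonnegative and vanishes exactly on functions constant on each edge, which by the continuity part of (A2)--(A3) means globally constant, so $\min\sg(\bfO)=0$. Since $\sg(Q)=\sg(\bfO)$, also $\min\sg(Q)=0$, and the associated ground state $\psi=(\psi_1,\dots,\psi_I)$ may be taken strictly positive (the ground state of such a Schr\"odinger form on a metric tree with Kirchhoff conditions is simple and nodeless). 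On each edge $-\psi_i''+q_i\psi_i=0$, so $q_i=\psi_i''/\psi_i=u_i'+u_i^2$ with $u_i=\psi_i'/\psi_i$; summing and integrating by parts gives
\[
  \sum_{i=1}^I \int_0^{a_i} q_i(x)\,dx
   = \sum_{i=1}^I\Big[\frac{\psi_i'}{\psi_i}\Big]_0^{a_i}
     + \sum_{i=1}^I \int_0^{a_i}\Big(\frac{\psi_i'}{\psi_i}\Big)^2 dx .
\]
The boundary terms cancel completely: at a pendant vertex $\psi_i'(0)=0$ by (A1), while at each internal vertex and at the root continuity makes $\psi$ single-valued and the Kirchhoff condition makes the collected log-derivatives sum to zero. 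Hence
\[
  \sum_{i=1}^I \int_0^{a_i} q_i(x)\,dx
   = \sum_{i=1}^I \int_0^{a_i}\Big(\frac{\psi_i'}{\psi_i}\Big)^2 dx \ \ge\ 0,
\]
with equality iff every $\psi_i$ is constant, i.e. $\psi\equiv\text{const}$ and $Q=\bfO$ a.e.

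This reduces the theorem to the single scalar identity $\sum_i\int_0^{a_i}q_i=0$, for then the last display forces $\psi_i'\equiv0$ and $q_i=\psi_i''/\psi_i\equiv0$ a.e. To extract $\sum_i\int_0^{a_i}q_i$ from the spectrum I would use the characteristic (determinant) function $\Phi_Q(\la)$ whose zero set is $\sg(Q)$, assembled edge by edge over $\Ga$ through a recursive formula reflecting the tree topology. With the fundamental-system asymptotics $c_i(a_i,\la)=\cos(\sqrt\la\,a_i)+\tfrac{\sin(\sqrt\la\,a_i)}{2\sqrt\la}\int_0^{a_i}q_i+O(\la^{-1})$ (and likewise for the sine solution), the recursion yields $\Phi_Q(\la)=\Phi_0(\la)+\tfrac{1}{2\sqrt\la}R(\la)+O(\la^{-1})$, where the first-order term $R(\la)$ is a trigonometric combination carrying the edge integrals $\int_0^{a_i}q_i$. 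Because $\sg(Q)=\sg(\bfO)$, the entire functions $\Phi_Q$ and $\Phi_0$ share their zeros, so at every large zero $\la_n$ of $\Phi_0$ the correction $R(\la_n)$ must be negligible against $\Phi_0'(\la_n)$; reading off the accumulated weight of $R$ along such $\la_n$ should pin $\sum_i\int_0^{a_i}q_i$ to $0$, after which the ground-state identity finishes the proof.

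The main obstacle is exactly this extraction when the ratios $\{a_i/L\}$ are irrational. In the commensurable case of Carlson--Pivovarchik one has the explicit zeros \eqref{explicit}, at which all factors $\cos(\sqrt{\la_n}\,a_i),\sin(\sqrt{\la_n}\,a_i)$ take controlled periodic values, so $R(\la_n)$ simplifies and $\sum_i\int_0^{a_i}q_i=0$ is immediate. For incommensurable lengths $\Phi_0$ is an almost periodic exponential sum in $\sqrt\la$ with frequencies built from partial sums of the $a_i$, its zeros admit no closed form, and one cannot evaluate $R$ along them directly. Here I expect to invoke a generalized pigeonhole (simultaneous Diophantine approximation) argument: along a subsequence $\sqrt\la\to\infty$ one forces the phases $\sqrt\la\,a_i \pmod{2\pi}$ to lie simultaneously in prescribed small arcs for all $i$, thereby locating large eigenvalues of $\Phi_0$ near which the configuration of the $\cos$/$\sin$ factors is known well enough to isolate the coefficient of $\sum_i\int_0^{a_i}q_i$ in $R$. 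Controlling the $O(\la^{-1})$ remainder uniformly along this subsequence, and excluding accidental cancellation among the competing frequencies, is the delicate point; carrying it through yields $\sum_i\int_0^{a_i}q_i=0$ and hence $Q=\bfO$ a.e.
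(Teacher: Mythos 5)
Your proposal follows essentially the same route as the paper's proof: a recursive edge-by-edge formula for the characteristic functions of the tree (the paper's Lemmas 2.1--2.2), a high-energy expansion whose first-order correction carries $\sum_{i}\int_0^{a_i}q_i\,dx$ (Lemma 3.2), and a pigeonhole / simultaneous Diophantine approximation argument that places all phases $\rho a_i$ simultaneously near multiples of $2\pi$, thereby locating actual zeros $\rho_n$ of $\psi_N$ at which the coefficient $\psi_D(\rho_n)$ tends to $1$ --- precisely the ``non-cancellation'' you flag as the delicate point, which is what the paper's Lemmas 4.1--4.4 establish --- whence $\sum_i\int_0^{a_i}q_i\,dx=0$. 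The only genuine divergence is the endgame: the paper argues that once this sum vanishes the constant function attains the infimum $0$ of the nonnegative Rayleigh quotient and is therefore an eigenfunction, forcing $q_i=0$ a.e., whereas you use strict positivity of the ground state together with the Riccati identity $\int_0^{a_i}q_i\,dx=\left[\psi_i'/\psi_i\right]_0^{a_i}+\int_0^{a_i}(\psi_i'/\psi_i)^2\,dx$ and cancellation of the vertex terms; both finishes are standard and correct, though yours carries the extra (routine) burden of proving the ground state is nodeless on a metric tree, while the paper's variational step needs only the constant as a test function.
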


When we deal with general trees, we encounter two kinds of
difficulty. The first one is that we must handle trees with
arbitrary number of edges. Moreover, even if the number of edges is
given, there are various trees with different topology. In order to
handle all trees, we shall derive a recursive formula for
characteristic functions whose zeros are the square roots of
eigenvalues. The second one is that we may not have explicit
eigenvalues as  (\ref{explicit}). To overcome the difficulty, we
 approximate $\{a_i/L\}$ precisely by rational numbers
at the same time by applying a (generalized) pigeon hole argument.

 In Section~2, we shall study  direct problems and derive
 recursive formulas for characteristic functions.
  In Section~3, we compute the
 expansion of characteristic functions.   In Section~4,
 we present a key lemma for the approximation of eigenvalues.
Finally Section~5 is devoted to the proof of Theorem~\ref{th:Ambar}.

 Hereafter in this paper, we shall let $\bfv_1$ be a pendant vertex which is an endpoint
of the edge $\ga_1$,  while $\bfv_2$ is another vertex at the other
end. And without loss of generality, we assume that $\ga_2$ is
connected to $\bfv_2$, and is closer to $\bfv_0$ than $\ga_1$. 
\section{Recursive formula for characteristic functions}
%
 Let $\rho
> 0$, and let $y=C_i(x;\rho)$ and $y=S_i(x;\rho)$ be (linearly
independent) solutions of
\begin{equation} \label{eq:second}
  -y''+q_i(x) y=\rho^2 y, \qquad 0<x<a_i
\end{equation}
 with the initial conditions
 \[
y(0)=1, \qquad y'(0)=0,
\]
and
\[
  y(0)=0, \qquad y'(0)=1,
\]
respectively.  We sometimes write $C_i(x;\rho)$ as $C_i(x)$
and $S_i(x;\rho)$ as $S_i(x)$  just for simplicity.
For each edge $\gamma_i$, we write  $y_i$ as
\[
y_i = A_i C_i(x) + B_i S_i(x).
  \]
Then from (A1)$\sim$(A3),
 we have a system of linear equations for the
unknowns $\{A_i\}$ and $\{B_i\}$.  Thus we may express the
coefficient matrix of the system of linear equations as
 \begin{equation}
\Phi_N(\rho)=
\begin{bmatrix}
 & C_1'(0)&S_1'(0)&0&0&  0& \cdots &0 \\
  &  C_1(a_1)&S_1(a_1) &  -C_2(0)  & - S_2(0)& 0 & \cdots & 0 &  \\
  &   C_1'(a_1)&S_1'(a_1) &  - C_2'(0) & -S_2'(0) &*&  \cdots & *  & \\
    &0& 0  &  * &* &  * &  \cdots & *  &
\\
   & \vdots& \vdots  &  \vdots&  \vdots & \vdots & &\vdots
\\
   &0  &  *&   *  & *&  \cdots & *
\end{bmatrix}, \label{eq2.05}
 \end{equation}
where the  first row corresponds to the Neumann boundary condition
 at $\bfv_1\in \Ga$, and the second and third rows
  correspond to the
continuity condition and the Kirchhoff condition at $\bfv_2$. Note
that in the first row, we include the term $S_1'(0)$ although the
corresponding coefficient $B_1=0$. We do this in order to have a
systematic form of $\Phi_N$ during reductions, as we shall see
later. We define a characteristic function by
\[
  \varphi_N (\rho) := \det \Phi_N (\rho),
\]
so that $\lambda=\rho^2  \in \sigma (Q)$
 if and only if $\varphi_N(\rho)=0$.
We note that the characteristic function depends on the orientation
of edges and how to express the coefficient matrix, but  the set  of
zeros of $\varphi_N (\rho)$ does not depend on them.


%

%


Next we introduce another eigenvalue problem by replacing
  (A1)  with the following condition:
\begin{description}
\item[\rm(A4)]
 If $\bfv_1$ is an endpoint of any boundary edge $\ga_i$,
 the solution of (\ref{eq:main})
satisfies the zero Dirichlet  boundary condition

\[
 y_i(0)=0 \ (\mbox{or }y_i(a_i))=0
\]
At other boundary vertices, the solution satisfies the homogeneous
Neumann boundary condition as in (A1).
\end{description}
Hereafter, we call (\ref{eq:main})  with (A2)$\sim$(A4)
 the Dirichlet-Neumann problem.
 If we impose the zero Dirichlet condition at
  $\alpha(\gamma_1) \in \partial \Gamma$, then $A_1=0$.
 Thus we may express the corresponding coefficient matrix by
\[
\Phi_D (\rho)=
\begin{bmatrix}
  &C_1(0)&S_1(0) &0&0&  0& \cdots &0 \\
   &C_1(a_1)& S_1(a_1)  & - C_2(0)  &-S_2(0)& 0 & \cdots & 0 &  \\
   &C_1'(a_1)& S_1'(a_1) & - C_2'(0) & -S_2'(0) &*&  \cdots & *  & \\
   &0&  0 &  * &* &  * &  \cdots & *  &
\\
    &\vdots& \vdots &  \vdots&  \vdots & \vdots & &\vdots
\\
   &0&0 &  *&   *  & *&  \cdots & *
\end{bmatrix}.
\]
Then we define a characteristic function for the Dirichlet-Neumann
problem by
\[
  \varphi_D (\rho) := \det \Phi_D (\rho).
\]
Again, the set of zeros of
$ \varphi_D (\rho)$ does not depend on the orientation of edges  and how to express
the coefficient
matrix.  Our interest will be only in zeros of the characteristic functions,
and hence the non-uniqueness of characteristic functions
 will not affect the following argument.

For a general tree, it is not easy to express explicitly the
characteristic functions $\varphi_N$ and $\varphi_D$.  Instead, we
may compute these functions recursively as follows. Let $\tilde
\Gamma$ be a subtree of $\Gamma$ obtained by removing $\gamma_1$. We
denote by $\tilde \varphi_N$ the corresponding characteristic
function of the problem with Neumann condition at $\bfv_2$ in case
$I(\bfv)=\{ 1,2\}$.  However if the degree of $\bfv_2$ is greater
than $2$,  then we take the continuity and Kirchhoff conditions at
$\bfv_2$ instead. Thus we say $\tilde \varphi_N$ the characteristic
function for a \underline{Neumann/Kirchhoff} problem on $\tilde
\Gamma$. (see
fig. 1)
\begin{figure}
 \begin{center}
 \includegraphics*[width=5.5truein,height=4truein]{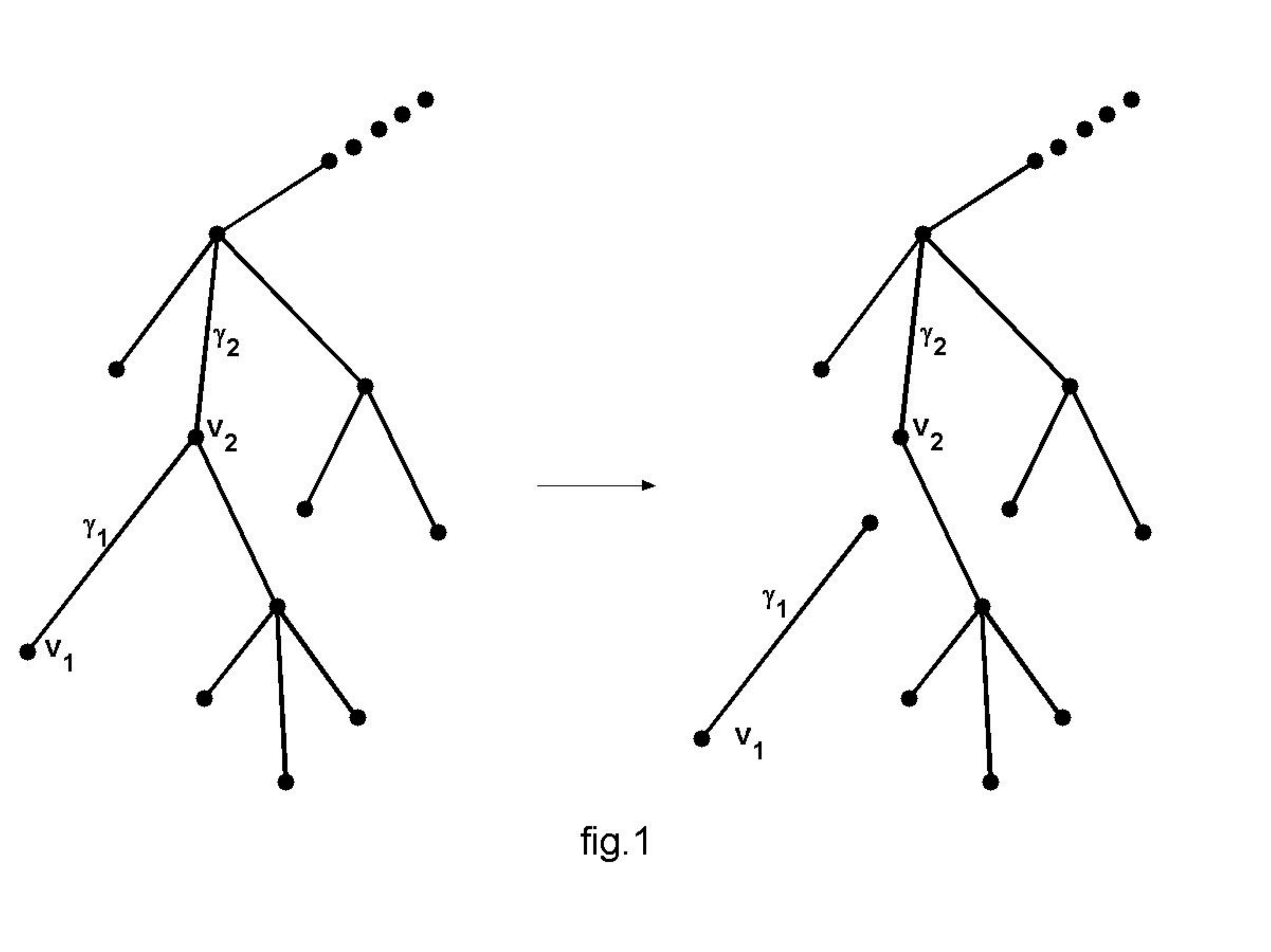}
\end{center}
\end{figure}
 \par
 If we remove $\gamma_1$ and
replace the matching conditions (A2) and (A3) by $y_j=0$ at the
vertex $\bfv_2$ for any $j\in I(\bfv_2)$, we have subtrees of
$\Gamma$ on which the Dirichlet-Neumann problems are defined. For
each subtree, a characteristic function of the Dirichlet-Neumann
problem is defined as above. We denote by $\tilde \varphi_D(\rho)$
the product of these characteristic functions.

For trees with two or more edges, we have
the following recursive formulas.


\begin{lem}  \label{le:recur}
Assume that $I \geq 2$. Then the characteristic functions have the
following properties:
\begin{description}
\item[\rm (a)] $\varphi_N(\rho) = C_1(a_1) \tilde \varphi_N(\rho)
 - C_1'(a_1) \tilde\varphi_D(\rho)$.
\item[\rm (b)] $\varphi_D(\rho) =  - S_1(a_1) \tilde \varphi_N(\rho)
+ S_1'(a_1) \tilde\varphi_D(\rho)$.
\end{description}
\end{lem}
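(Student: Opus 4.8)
The plan is to derive both (a) and (b) by cofactor expansion of $\varphi_N=\det\Phi_N$ and $\varphi_D=\det\Phi_D$ along the rows and columns attached to the edge $\ga_1$, exploiting the sparsity produced by the initial conditions $C_i(0)=1$, $C_i'(0)=0$, $S_i(0)=0$, $S_i'(0)=1$. These conditions make the first row of $\Phi_N$ equal to $(0,1,0,\dots,0)$ and the first row of $\Phi_D$ equal to $(1,0,0,\dots,0)$, while the first two columns (the coefficients $A_1,B_1$ of $\ga_1$) are nonzero only in rows $1,2,3$, i.e.\ in the $\bfv_1$-boundary row and the continuity and Kirchhoff rows at $\bfv_2$.

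First I would expand $\varphi_N$ along its first row. Since that row has a single nonzero entry $S_1'(0)=1$ in position $(1,2)$, the expansion collapses to $\varphi_N=-M_{12}$, where $M_{12}$ is the minor obtained by deleting row $1$ and column $2$. In $M_{12}$ the remaining $\ga_1$-column carries only the two entries $C_1(a_1)$ and $C_1'(a_1)$, coming from the continuity and Kirchhoff rows at $\bfv_2$; expanding along it gives $M_{12}=C_1(a_1)\,M_{\{1,2\}}-C_1'(a_1)\,M_{\{1,3\}}$, and hence $\varphi_N=-C_1(a_1)\,M_{\{1,2\}}+C_1'(a_1)\,M_{\{1,3\}}$, where $M_{\{1,2\}}$ and $M_{\{1,3\}}$ are the double minors obtained by deleting rows $\{1,2\}$ (resp.\ $\{1,3\}$) together with both $\ga_1$-columns. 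The same two-step expansion applied to $\varphi_D$, whose first row is $(1,0,\dots,0)$, involves no overall sign and replaces $C_1$ by $S_1$, yielding $\varphi_D=S_1(a_1)\,M_{\{1,2\}}-S_1'(a_1)\,M_{\{1,3\}}$. (Equivalently one may perform a single Laplace expansion along the two $\ga_1$-columns; then the $(2,3)$-term drops out because the first row vanishes on the complementary columns, and the $(2,3)$ minor factor equals the constant Wronskian $C_1S_1'-S_1C_1'\equiv1$.)

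It then remains to identify the surviving minors with the subtree characteristic functions, namely $M_{\{1,2\}}=-\tilde\varphi_N$ and $M_{\{1,3\}}=-\tilde\varphi_D$; substituting these into the two expansions above reproduces (a) and (b) at once. For $M_{\{1,2\}}$ I would observe that deleting both $\ga_1$-columns together with the $\bfv_1$-row and the $\ga_1$--$\ga_2$ continuity row simply erases $\ga_1$ from the Kirchhoff balance at $\bfv_2$, so the remaining matrix is exactly the coefficient matrix of the Neumann/Kirchhoff problem on $\tilde\Ga$, up to the sign of the row carrying $-C_2'(0),-S_2'(0)$, which accounts for the factor $-1$. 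For $M_{\{1,3\}}$, the continuity row $y_1(a_1)=y_2(0)$, once $\ga_1$ is removed and reduced by the initial conditions, degenerates into the Dirichlet constraint $A_2=0$ at $\bfv_2$; this in turn forces, through the remaining continuity relations, a Dirichlet condition $y_j=0$ at $\bfv_2$ on every edge meeting $\bfv_2$, decoupling the determinant into the product of the Dirichlet--Neumann characteristic functions of the subtrees hanging from $\bfv_2$, again up to one overall sign.

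The main obstacle I anticipate is precisely this last identification for $M_{\{1,3\}}$ when $\deg\bfv_2>2$: one must verify that the reduced block at $\bfv_2$ genuinely factorizes as the product defining $\tilde\varphi_D$, and that the accumulated signs from the several continuity rows combine to give exactly $-\tilde\varphi_D$ rather than $+\tilde\varphi_D$. The degree-two case $I(\bfv_2)=\{1,2\}$ is a clean check, in which the Kirchhoff row reduces to the Neumann condition $y_2'(0)=0$ and the product collapses to a single factor; I would settle this case first and then extend the bookkeeping to arbitrary degree, tracking orientations so that the sign conventions in $\Phi_N$, $\Phi_D$ and in the subtree matrices stay consistent.
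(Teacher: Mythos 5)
Your proposal is correct and follows essentially the same route as the paper's proof: cofactor expansion of $\det\Phi_N$ (resp.\ $\det\Phi_D$) along the first row and then along the remaining $\gamma_1$-column, followed by identification of the two surviving minors with $\tilde\varphi_N$ and $\tilde\varphi_D$. The only difference is bookkeeping: the paper absorbs the sign of the negated $\gamma_2$-rows by defining the reduced matrices $\tilde\Phi_N,\tilde\Phi_D$ with positive entries (and is rather terse about the block factorization behind $\tilde\varphi_D$ when $\deg\bfv_2>2$), whereas you track these signs explicitly in the minors $M_{\{1,2\}}=-\tilde\varphi_N$, $M_{\{1,3\}}=-\tilde\varphi_D$ --- a harmless discrepancy since, as the paper notes, characteristic functions are only determined up to such conventions.
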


\begin{proof}
Since $C_1'(0)=0$ and $S_1'(0)=1$,   by expansions with respect to
the first row and the first column,  we have
\[
\begin{aligned}
\det \Phi_N (\rho) &= - \det
\begin{bmatrix}
  & C_1(a_1)   & - C_2(0)  &-S_2(0)& 0 & \cdots & 0 &  \\
  &   C_1'(a_1) &-  C_2'(0) & -S_2'(0) &*&  \cdots & *  & \\
    & 0 &    * &* &  * &  \cdots & *  &
\\
   & \vdots&  \vdots&  \vdots & \vdots &  & \vdots
\\
   &0 &  *&   *  & *&  \cdots & *
\end{bmatrix}
\\ &
=   C_1(a_1) \det \tilde \Phi_N (\rho)  -  C_1'(a_1) \det \tilde
\Phi_D (\rho),
\end{aligned}
\]
where $\tilde \Phi_N$ and $\tilde \Phi_D$ are $(2I-2) \times(2I-2)$
matrices given by
\[
 \tilde \Phi_N (\rho)=
 \begin{bmatrix}
&  C_2'(0) & S_2'(0) &*&  \cdots & *  & \\
 &    * &* &  * &  \cdots & *  &
\\
&  \vdots&  \vdots & \vdots & &\vdots
\\
    &  *&   *  & *&  \cdots & *
\end{bmatrix}
\]
and
\[
 \tilde \Phi_D (\rho)=
 \begin{bmatrix}
     &  C_2(0)  &S_2(0)& 0 & \cdots & 0 &  \\
   &    * &* &  * &  \cdots & *  &
\\
  &  \vdots&  \vdots & \vdots & &\vdots
\\
    &  *&   *  & *&  \cdots & *
\end{bmatrix},
\]
respectively.  Noting that $\tilde \Phi_N$ describes conditions on
the Neumann/Kirchhoff problem for $\Gamma$  with   $\gamma_1$
removed, we have
\[
  \det \tilde \Phi_N (\rho) = \tilde \varphi_N (\rho).
\]
Similarly, since $\tilde \Phi_D$   describes conditions on
 the Dirichlet-Neumann problems for
 subtrees of $\Gamma$  obtained by removing   $\gamma_1$, we have
\[
\det \tilde \Phi_N (\rho) = \tilde \varphi_D (\rho).
\]
Thus the proof of (a) is completed.

Next, let us consider the  Dirichlet-Neumann problem. In this case,
we have the expansion, since $C_1(0)=1$ and $S_1(0)=0$,
\[
\begin{aligned}
\det \Phi_D (\rho) &=  \det
\begin{bmatrix}
 & S_1(a_1)   & - C_2(0)  &-S_2(0)& 0 & \cdots & 0 &  \\
  &   S_1'(a_1) &-  C_2'(0) & -S_2'(0) &*&  \cdots & *  & \\
     & 0 &    * &* &  * &  \cdots & *  &
\\
   & \vdots&  \vdots&  \vdots & \vdots & &\vdots
\\
   &0 &  *&   *  & *&  \cdots & *
\end{bmatrix}
\\ &
 = - S_1(a_1) \det \tilde \Phi_N (\rho) +  S_1'(a_1) \det \tilde \Phi_D (\rho).
\end{aligned}
\]
This proves (b).
\end{proof}
\begin{rem}
More general recursive formulas were obtained in a recent paper by
Law and Pivovarchik \cite{LP}. Interested readers might like to read
a spectral determinant approach to the same formulas \cite{T}.
\end{rem}

Now we compute the characteristic function of a general tree as follows.
Given a tree with two or more edges, we remove one of the
edges of $\Gamma$ and use the  recursive formulas.
Repeating this procedure, we will reach to problems on single edges.
For a single edge $\gamma_i$, we
may define its characteristic functions by
\begin{equation} \label{eq:iniphi}
  \tilde \varphi_N (\rho) := - C_i'(a_i), \qquad \tilde \varphi_D :=S_i'(a_i).
\end{equation}
Thus we can express $\varphi_N (\rho)$ and $\varphi_D (\rho)$ as
polynomials of
 $\{C_i(a_i )\}$ and  $\{S_i(a_i) \}$.

Next, we consider the zero potential $Q=\bfO$, and
 denote by $\psi_n (\rho)$ and $\psi_D (\rho)$
the corresponding characteristic functions of the Neumann/Kirchhoff
problem and the Dirichlet-Neumann problem, respectively. Similarly,
we denote by
  $\tilde\psi_N$ and $\tilde\psi_D$ be characteristic functions
  for the Neumann and
Dirichlet-Neumann problems for $\Gamma$ with   $\gamma_1$
removed.  For a tree with a single edge $\gamma_i$,
to be consistent with (\ref{eq:iniphi}),
we define its characteristic functions by
\begin{equation} \label{eq:inipsi}
\tilde\psi_N(\rho):= \sin(\rho a_i), \qquad \tilde\psi_D(\rho):=\cos(\rho a_i).
\end{equation}
For trees with two or more edges,  the characteristic functions can be computed by
using the following recursive formulas repeatedly and (\ref{eq:inipsi}).

\begin{lem}  \label{le:recur2}
 Assume $I\geq 2$.
Then the characteristic functions for $Q=\bfO$ have the following
properties:
\begin{description}
\item[\rm (a)] $\psi_N(\rho) =   \cos(\rho a_1) \tilde \psi_N(\rho)
 + \sin(\rho a_1) \tilde\psi_D(\rho)$.
\item[\rm (b)] $\psi_D(\rho) =  -  \sin(\rho a_1)
\tilde \psi_N(\rho)   +  \cos (\rho a_1) \tilde\psi_D(\rho)$.
\end{description}
\end{lem}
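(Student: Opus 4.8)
The plan is to specialize the already-proved recursive formulas of Lemma~\ref{le:recur} to the zero-potential case $Q=\bfO$. The key observation is that when $q_1\equiv 0$, the fundamental solutions of (\ref{eq:second}) on $\gamma_1$ are explicit: $C_1(x;\rho)=\cos(\rho x)$ and $S_1(x;\rho)=\frac{1}{\rho}\sin(\rho x)$, since these satisfy $-y''=\rho^2 y$ with the prescribed initial data $C_1(0)=1,\ C_1'(0)=0$ and $S_1(0)=0,\ S_1'(0)=1$. Evaluating at the endpoint $x=a_1$ gives the four quantities that appear as coefficients in Lemma~\ref{le:recur}, namely
\begin{equation}
C_1(a_1)=\cos(\rho a_1),\quad C_1'(a_1)=-\rho\sin(\rho a_1),\quad S_1(a_1)=\tfrac{1}{\rho}\sin(\rho a_1),\quad S_1'(a_1)=\cos(\rho a_1).
\end{equation}

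\medskip

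Next I would substitute these explicit values directly into parts (a) and (b) of Lemma~\ref{le:recur}, with $\tilde\varphi_N,\tilde\varphi_D$ replaced by their zero-potential counterparts $\tilde\psi_N,\tilde\psi_D$. For part (a) this yields
\[
\psi_N(\rho)=\cos(\rho a_1)\,\tilde\psi_N(\rho)-\big(-\rho\sin(\rho a_1)\big)\tilde\psi_D(\rho)=\cos(\rho a_1)\,\tilde\psi_N(\rho)+\rho\sin(\rho a_1)\,\tilde\psi_D(\rho),
\]
and for part (b),
\[
\psi_D(\rho)=-\tfrac{1}{\rho}\sin(\rho a_1)\,\tilde\psi_N(\rho)+\cos(\rho a_1)\,\tilde\psi_D(\rho).
\]

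\medskip

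The one genuine wrinkle is that the factors $S_1(a_1)=\tfrac1\rho\sin(\rho a_1)$ and $C_1'(a_1)=-\rho\sin(\rho a_1)$ carry factors of $\rho^{\pm 1}$, whereas the target formulas in Lemma~\ref{le:recur2} have the clean coefficients $\sin(\rho a_1)$ and $-\sin(\rho a_1)$ with no $\rho$. \textbf{This is the main point to be reconciled.} The resolution is that the characteristic functions for zero potential are defined by a \emph{different} normalization of the single-edge base case (\ref{eq:inipsi}) versus (\ref{eq:iniphi}): one uses $\tilde\psi_N=\sin(\rho a_i),\ \tilde\psi_D=\cos(\rho a_i)$ in place of $\tilde\varphi_N=-C_i'(a_i),\ \tilde\varphi_D=S_i'(a_i)$. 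Tracking how this base change propagates through the recursion, the effect is precisely to rescale each application of the formula so that the spurious powers of $\rho$ cancel; equivalently, one may verify directly that the $\psi_N,\psi_D$ built from (\ref{eq:inipsi}) satisfy the stated recursion by induction on $I$, the inductive step being exactly the substitution above together with this rescaling. Since only the zero set of the characteristic function matters (as remarked after the definition of $\varphi_D$), the overall nonzero factor introduced by the renormalization is harmless, and after it is absorbed the coefficients collapse to $\cos(\rho a_1),\ \sin(\rho a_1),\ -\sin(\rho a_1),\ \cos(\rho a_1)$, giving exactly (a) and (b). I would present the argument as this direct substitution into Lemma~\ref{le:recur} followed by the normalization bookkeeping, rather than redoing the determinant expansion from scratch.
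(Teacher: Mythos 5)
Your proposal is correct, and its engine is the same as the paper's: when $Q=\bfO$ the fundamental solutions are trigonometric, so Lemma~\ref{le:recur} specializes to the stated recursion. Where the two differ is the bookkeeping. The paper's proof simply declares $C_i(x;\rho)=\cos(\rho x)$, $S_i(x;\rho)=\sin(\rho x)$ and repeats the determinant expansion; as you observed, $\sin(\rho x)$ is not the $S_i$ of Section~2 (its derivative at $0$ is $\rho$, not $1$), so the paper is tacitly rescaling rows and columns of $\Phi_N$ and $\Phi_D$, which is harmless since only zero sets matter. You instead keep the correctly normalized $S_1(x;\rho)=\rho^{-1}\sin(\rho x)$, use Lemma~\ref{le:recur} as a black box, and then renormalize the outputs to match (\ref{eq:inipsi}). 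That route works, and is in fact more scrupulous than the paper's one-liner, but your phrase ``the spurious powers of $\rho$ cancel'' should be replaced by the precise invariant that makes the induction close: if $\widehat\psi_N$ and $\widehat\psi_D$ denote the zero-potential specializations of $\varphi_N$ and $\varphi_D$ (built from the normalized $C_i,S_i$ with base case (\ref{eq:iniphi})), then for every subtree one has $\psi_N=\rho^{-1}\widehat\psi_N$ and $\psi_D=\widehat\psi_D$, with the single factor $\rho^{-1}$ appearing only on the Neumann side. Indeed, for a single edge (\ref{eq:iniphi}) gives $-C_i'(a_i)=\rho\sin(\rho a_i)$ and $S_i'(a_i)=\cos(\rho a_i)$, matching (\ref{eq:inipsi}) up to exactly one factor of $\rho$ in the Neumann function and none in the Dirichlet one; the Dirichlet half of the invariant survives the products defining $\tilde\psi_D$ because each factor carries no correction; and in the inductive step your substituted identities, namely $\widehat\psi_N=\cos(\rho a_1)\,\widehat{\tilde\psi}_N+\rho\sin(\rho a_1)\,\widehat{\tilde\psi}_D$ and $\widehat\psi_D=-\rho^{-1}\sin(\rho a_1)\,\widehat{\tilde\psi}_N+\cos(\rho a_1)\,\widehat{\tilde\psi}_D$, turn into (a) after division by $\rho$ and into (b) as they stand. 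Since both sides of (a) and (b) are entire in $\rho$, the identities obtained for $\rho\neq 0$ extend to $\rho=0$ by continuity, so the singular factor $\rho^{-1}$ causes no trouble.
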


\begin{proof}
For $q_i(x) \equiv 0$, the solutions of (\ref{eq:second}) are given by
$C_i(x;\rho) = \cos(\rho x)$ and $S_i(x;\rho) = \sin(\rho x)$.
Then the above recursive formulas can be obtained in the same way as
 Lemma~\ref{le:recur}.
\end{proof}

\section{Expansion of characteristic functions}

 In this section we show the following
 asymptotic formulas for  $C_i(a_i;\rho)$ and $S_i(a_i;\rho)$
 as $ \rho  \to \infty$.

\begin{lem}  \label{le:rho}
As $\rho \to \infty$, one has
\[
\begin{aligned}
   C_i(a_i;\rho)&=\cos(\rho a_i)+ \rho^{-1}K_i\sin(\rho a_i)
   +o(\rho^{-1}),
\\
  C_i'(a_i;\rho)&=-\rho\sin(\rho a_i)+K_i\cos(\rho a_i) +o(1),
\\
  S_i(a_i;\rho)&=\rho^{-1}  \sin(\rho a_i)
  - \rho^{-2} K_i \cos(\rho a_i)  +o(\rho^{-2}),
\\
      S_i'(a_i;\rho)&=\cos(\rho a_i)+\rho^{-1} K_i \sin(\rho
a_i) + o(\rho^{-1}),
\end{aligned}
\]
where
\[
 K_i:=\frac{1}{2}\int_0^{a_i} q_i(x)dx .
\]
 \end{lem}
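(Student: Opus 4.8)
The plan is to prove these four asymptotic formulas by setting up integral equations for $C_i$ and $S_i$ and iterating them once (the standard Volterra/variation-of-parameters approach for Sturm-Liouville problems). I would drop the subscript $i$ throughout, write $a=a_i$, $q=q_i$, and treat $\rho\to\infty$ with $\rho$ real and large. The starting point is that $C(x;\rho)$ and $S(x;\rho)$ solve $-y''+q(x)y=\rho^2 y$ with the stated initial data, so by treating $q(x)y$ as an inhomogeneous forcing term and using the known solutions $\cos(\rho x),\ \rho^{-1}\sin(\rho x)$ of the free equation together with the Green's kernel $\rho^{-1}\sin(\rho(x-t))$, I obtain the two Volterra integral equations
\begin{align}
C(x;\rho)&=\cos(\rho x)+\frac{1}{\rho}\int_0^x \sin(\rho(x-t))\,q(t)\,C(t;\rho)\,dt,\label{eq:VC}\\
S(x;\rho)&=\frac{\sin(\rho x)}{\rho}+\frac{1}{\rho}\int_0^x \sin(\rho(x-t))\,q(t)\,S(t;\rho)\,dt.\label{eq:VS}
\end{align}

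First I would establish the a priori bounds $C(x;\rho)=O(1)$ and $S(x;\rho)=O(\rho^{-1})$ uniformly on $[0,a]$, which follow from \eqref{eq:VC}--\eqref{eq:VS} by a Gronwall estimate on the iterates. Next I would substitute these leading-order bounds back into the integrals to extract the first correction. For $C$ I replace $C(t;\rho)$ by $\cos(\rho t)+O(\rho^{-1})$ inside \eqref{eq:VC}; expanding $\sin(\rho(x-t))\cos(\rho t)$ by the product-to-sum identity, the oscillatory term $\sin(\rho x)$ integrates against $q$ to give $\rho^{-1}\sin(\rho x)\cdot\frac{1}{2}\int_0^x q\,dt$, while the term carrying $\sin(\rho(x-2t))$ is $o(1)$ after division by $\rho$ by the Riemann--Lebesgue lemma. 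Evaluating at $x=a$ then yields $C(a;\rho)=\cos(\rho a)+\rho^{-1}K\sin(\rho a)+o(\rho^{-1})$ with $K=\frac12\int_0^a q$. The formula for $S(a;\rho)$ is obtained identically from \eqref{eq:VS}, with one extra factor of $\rho^{-1}$ throughout.

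For the derivative formulas I would differentiate \eqref{eq:VC}--\eqref{eq:VS} in $x$; since $\sin(\rho(x-t))$ vanishes at $t=x$, the differentiated kernel is $\cos(\rho(x-t))$, giving
\begin{align}
C'(x;\rho)&=-\rho\sin(\rho x)+\int_0^x \cos(\rho(x-t))\,q(t)\,C(t;\rho)\,dt,\label{eq:dC}\\
S'(x;\rho)&=\cos(\rho x)+\int_0^x \cos(\rho(x-t))\,q(t)\,S(t;\rho)\,dt.\label{eq:dS}
\end{align}
Inserting the leading orders of $C$ and $S$ and applying the same product-to-sum expansion and Riemann--Lebesgue argument, the integral in \eqref{eq:dC} contributes $K\cos(\rho a)+o(1)$ at $x=a$, and that in \eqref{eq:dS} contributes $\rho^{-1}K\sin(\rho a)+o(\rho^{-1})$, matching the claimed expansions.

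I expect the main obstacle to be the bookkeeping of error terms, specifically making the $o(\rho^{-1})$ and $o(1)$ remainders rigorous rather than merely $O$-estimates. The point is that the leading oscillatory integral produces the genuine $K$-term, but one must argue that the remaining integral $\int_0^a \sin(\rho(x-2t))q(t)\,dt$ truly tends to zero, which requires the Riemann--Lebesgue lemma and hence only $q\in L^1$; if instead one wants uniformity of the $o$-terms in some parameter, more care is needed. The cleanest route is to phrase everything so that a single application of Riemann--Lebesgue (valid for $L^1$ potentials) kills all the leftover oscillatory pieces after the explicit $K$-term has been separated off.
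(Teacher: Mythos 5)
Your proposal is correct and follows essentially the same route as the paper's proof: the same Volterra integral equations from variation of constants, one iteration using the a priori bounds $C_i=O(1)$, $S_i=O(\rho^{-1})$, the product-to-sum identity to isolate the $K_i$-term, and the Riemann--Lebesgue lemma to dispose of the $\sin(\rho(x-2t))$ and $\cos(\rho(x-2t))$ integrals (which the paper invokes only implicitly in its $o(\cdot)$ steps). Your explicit Gronwall bound and explicit appeal to Riemann--Lebesgue simply make rigorous what the paper states tersely, so there is nothing to correct.
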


 \begin{proof}
 By the variation of constants method, it is easy to show that
 $C_i(x)$ satisfies
\[
 C_i(x) = \cos(\rho x)+\frac{1}{\rho}\int_0^x
 \sin(\rho(x-t))q_i(t)C_i(t)\, dt.
 \]
 Hence
\[
 C_i(x) = \cos(\rho x)+O(\rho^{-1})
\]
  and
\[
\begin{aligned}
  C_i(x)&= \cos(\rho x)+ \frac{1}{\rho}\int_0^x
  \sin(\rho(x-t))q_i(t) \cos(\rho t)\, dt +O(\rho^{-2}) \\
   &= \cos(\rho x)+ \frac{1}{2\rho}\int_0^x (\sin(\rho
   x)+\sin(\rho(x-2t))q_i(t)\, dt+O(\rho^{-2}) \\
   &= \cos(\rho x)+ \frac{\sin(\rho x)}{2\rho}\int_0^x q_i(t)\, dt
   +o(\rho^{-1}).
\end{aligned}
\]
   Furthermore,
\[
\begin{aligned}
   C_i'(x) &= -\rho \sin(\rho x)+\int_0^x \cos(\rho(x-t)) q_i(t)
   C_i(t)\, dt\\
        &= -\rho \sin(\rho x)+\int_0^x \cos(\rho(x-t))\cos(\rho t)
        q_i(t)\, dt+O(\rho^{-1}) \\
        &= -\rho \sin(\rho x)
            +\frac{1}{2}\int_0^x \big(\cos(\rho x)+\cos(\rho(x-2t))\big)
        q_i(t)\, dt+O(\rho^{-1}) \\
        &=-\rho \sin(\rho x)+\frac{\cos(\rho x)}{2}\int_0^x
        q_i(t)\, dt+o(1).
\end{aligned}
\]
   Evaluating at $x=a_i$, we obtain the asymptotic formulas for $C_i(a_i)$ and $C_i'(a_i)$.

   The asymptotic formulas for $S_i(a_i)$ and $S_i'(a_i)$ can  be
derived  similarly by using  
\[
   S_i(x)= \frac{\sin(\rho x)}{\rho}+\frac{1}{\rho^2}\int_0^x\sin(\rho(x-t))q_i(t)S_i(t)\,
   dt.
\]
We omit the details.
\end{proof}

\begin{lem} \label{le:expand}
The characteristic functions $\varphi_N$ and $\varphi_D$ have the
following properties:
\begin{description}
\item[\rm (a)]
$\varphi_N(\rho)=\rho\psi_N(\rho) - \Big (
 \displaystyle \sum_{i=1}^{I} K_i \Big ) \psi_D(\rho)
    +o(1)$ as $\rho\to\infty$.
\item[\rm (b)]
 $\varphi_D(\rho) = \psi_D(\rho)  +
\rho^{-1} \Big ( \displaystyle \sum_{i=1}^{I} K_i \Big )
\psi_N(\rho) +o(\rho^{-1})$ as $\rho\to\infty$.
\end{description}
\end{lem}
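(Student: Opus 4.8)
The plan is to prove both expansions (a) and (b) simultaneously by induction on the number of edges $I$, using the recursive formulas of Lemma~\ref{le:recur} together with the asymptotic formulas of Lemma~\ref{le:rho}. The base case $I=1$ follows directly from the definitions in (\ref{eq:iniphi}) and (\ref{eq:inipsi}): using $\tilde\varphi_N=-C_1'(a_1)$, $\tilde\varphi_D=S_1'(a_1)$ and Lemma~\ref{le:rho}, we get $-C_1'(a_1)=\rho\sin(\rho a_1)-K_1\cos(\rho a_1)+o(1)$ and $S_1'(a_1)=\cos(\rho a_1)+\rho^{-1}K_1\sin(\rho a_1)+o(\rho^{-1})$, which match $\rho\tilde\psi_N-K_1\tilde\psi_D+o(1)$ and $\tilde\psi_D+\rho^{-1}K_1\tilde\psi_N+o(\rho^{-1})$ respectively (with $\tilde\psi_N=\sin(\rho a_1)$, $\tilde\psi_D=\cos(\rho a_1)$). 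So the correct inductive hypothesis to carry is the pair
\[
\tilde\varphi_N=\rho\tilde\psi_N-\Big(\sum_{i\neq 1}K_i\Big)\tilde\psi_D+o(1),
\qquad
\tilde\varphi_D=\tilde\psi_D+\rho^{-1}\Big(\sum_{i\neq 1}K_i\Big)\tilde\psi_N+o(\rho^{-1}),
\]
where the sums run over the edges remaining after $\gamma_1$ is removed.

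For the inductive step I would substitute the hypothesis for $\tilde\varphi_N,\tilde\varphi_D$ and the $\rho\to\infty$ expansions of $C_1(a_1),C_1'(a_1),S_1(a_1),S_1'(a_1)$ from Lemma~\ref{le:rho} into the recursive identities $\varphi_N=C_1(a_1)\tilde\varphi_N-C_1'(a_1)\tilde\varphi_D$ and $\varphi_D=-S_1(a_1)\tilde\varphi_N+S_1'(a_1)\tilde\varphi_D$, then collect terms by order of $\rho$ and compare against the recursive formulas of Lemma~\ref{le:recur2} for $\psi_N,\psi_D$. For part (a), the leading term comes from $C_1(a_1)\cdot\rho\tilde\psi_N$ giving $\rho\cos(\rho a_1)\tilde\psi_N$ and from $-C_1'(a_1)\tilde\psi_D=\rho\sin(\rho a_1)\tilde\psi_D$, which combine via Lemma~\ref{le:recur2}(a) into $\rho\psi_N$. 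The order-$1$ terms should assemble into $-\big(\sum_{i=1}^{I}K_i\big)\psi_D$: the $K_1$ contributions arise from the $\rho^{-1}K_1\sin(\rho a_1)$ part of $C_1(a_1)$ multiplied by $\rho\tilde\psi_N$ and from the $K_1\cos(\rho a_1)$ part of $-C_1'(a_1)$ multiplied by $\tilde\psi_D$, while the $\sum_{i\neq1}K_i$ contributions come from pairing $\cos(\rho a_1),\sin(\rho a_1)$ against the inductive error-free terms in $\tilde\varphi_N,\tilde\varphi_D$; using Lemma~\ref{le:recur2}(b) one recognizes the resulting combination as exactly $-\big(\sum K_i\big)\psi_D$.

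The main obstacle I anticipate is the careful bookkeeping of the error terms to confirm that everything genuinely collapses to $o(1)$ in (a) and $o(\rho^{-1})$ in (b), rather than accidentally leaving a surviving term of the wrong order. In particular, for part (a) one must check that the product of the $\rho^{-1}$-order correction in $C_1(a_1)$ with the $o(1)$ error in $\tilde\varphi_N$, and the product $\rho\cdot o(\rho^{-1})$ arising from $C_1'(a_1)\cdot(\text{error in }\tilde\varphi_D)$, are both controlled; here the factor $\rho$ multiplying an $o(\rho^{-1})$ term is the delicate point, and one needs the error in $\tilde\varphi_D$ to be genuinely $o(\rho^{-1})$ (as the hypothesis asserts) so that $-C_1'(a_1)\cdot o(\rho^{-1})=-(\rho\sin(\rho a_1)+O(1))\cdot o(\rho^{-1})=o(1)$. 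The analogous check for part (b) requires tracking that the $S_1(a_1)=O(\rho^{-1})$ prefactor tames the $\rho\tilde\psi_N$ leading term into an $O(1)$ contribution whose $K$-weighted piece lands at the right $\rho^{-1}$ order. Once the orders are confirmed, matching the surviving coefficients against $\psi_N$ and $\psi_D$ through Lemma~\ref{le:recur2} is purely mechanical, so the entire difficulty is concentrated in the asymptotic accounting.
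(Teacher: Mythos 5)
Your proposal is correct and follows essentially the same route as the paper's own proof: induction on the tree, with the single-edge case from (\ref{eq:iniphi}) and (\ref{eq:inipsi}) as base, substitution of the Lemma~\ref{le:rho} asymptotics and the inductive hypothesis into the recursions of Lemma~\ref{le:recur}, and recombination of the surviving terms into $\psi_N$ and $\psi_D$ via Lemma~\ref{le:recur2}. Your key observation --- that the full $o(\rho^{-1})$ precision in (b) is exactly what is needed so that $-C_1'(a_1)\cdot o(\rho^{-1})=o(1)$ in (a), forcing (a) and (b) to be carried together through the induction --- is precisely the error bookkeeping implicit in the paper's computation.
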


\begin{proof}
If $\Gamma$ consists of a single edge, then by Lemma~\ref{le:rho},
we have
\[
  \varphi_N(\rho)=- C_1'(a_1)=  \rho \sin(\rho a_1) - K_1
\cos(\rho a_1)+o(1),
\]
\[
  \varphi_D(\rho)=S_1'(a_1)= \cos(\rho a_1) +
\rho^{-1} K_1 \sin(\rho a_1) +o(\rho^{-1}),
\]
so that
\[
  \varphi_N(\rho)=\rho\psi_N(\rho) -K_1 \psi_D(\rho)
    +o(1),
\]
\[
  \varphi_D(\rho)= \psi_D(\rho) +
\rho^{-1} K_1 \psi_N(\rho) +o(\rho^{-1}).
\]
Hence (a) and (b) hold  in this case.

Suppose now that (a) and (b) hold for any subtree of
$\Gamma$.  Then by Lemmas~\ref{le:recur} and \ref{le:recur2}, we
have
\[
\begin{aligned}
   \varphi_N(\rho) &= C_1(a_1) \tilde \varphi_N(\rho)
 - C_1'(a_1) \tilde\varphi_D(\rho)
\\
 &= \Big \{ \cos(\rho a_1)
+\rho^{-1}  K_1  \sin(\rho a_1)+o(1) \Big \} \,
     \Big \{ \rho\tilde \psi_N(\rho) - \Big( \sum_{i=2} ^{I} K_i \Big)
\tilde\psi_D(\rho)+o(1)
\Big \}
\\
 & \qquad  -  \Big \{ -\rho\sin(\rho a_1)+K_1\cos(\rho a_1)
+o(1) \Big \} \, \Big \{   \tilde \psi_D(\rho) + \rho^{-1} \Big(
\sum_{i=2}^{I} K_i \Big) \tilde \psi_N(\rho) +o(\rho^{-1}) \Big \}
\\
 &=  \rho \cos(\rho a_1)\tilde\psi_N(\rho) + K_1 \sin(\rho
a_1)\tilde\psi_N(\rho) -
   \cos(\rho a_1) \Big( \sum_{i=2} ^{I} K_i \Big) \tilde\psi_D(\rho)
\\ & \qquad
   + \rho \sin(\rho a_1)\tilde \psi_D(\rho) -
K_1\cos(\rho a_1)\tilde\psi_D(\rho) + \sin(\rho a_1) \Big (
\sum_{i=2} ^{I} K_i \Big) \tilde\psi_N(\rho)  + o(1)
\\
 &= \rho \psi_N(\rho) -
K_1 \psi_D(\rho) - \Big (\sum_{i=2}^I  K_i \Big )\psi_D(\rho) +o(1),
\\
 &= \rho \psi_N(\rho) - \Big (\sum_{i=1}^I K_i \Big)\psi_D(\rho)
+o(1).
\end{aligned}
\]
Similarly,
\[
\begin{aligned}
   \varphi_D(\rho) &= -S_1(a_1) \tilde \varphi_N(\rho)
+  S_1'(a_1) \tilde\varphi_D(\rho)
\\
 &= -\Big \{ \rho^{-1} \sin(\rho a_1)
-\rho^{-2}K_1\cos(\rho a_1)+o(\rho^{-2}) \Big \} \,
     \Big \{ \rho\tilde \psi_N(\rho) -\Big( \sum_{i=2} ^{I} K_i \Big)
\tilde\psi_D(\rho) +o(1)
\Big \}
\\
 &  \qquad + \Big \{ \cos(\rho a_1)+\rho^{-1} K_1\sin(\rho a_1)
+o(1) \Big \} \, \Big \{   \tilde \psi_D(\rho) + \rho^{-1} \Big(
\sum_{i=2}^{I} K_i \Big) \tilde \psi_N(\rho) +o(\rho^{-1}) \Big \}
\\
 &=  -  \sin(\rho a_1)\tilde\psi_N(\rho)
 + \rho^{-1} \bigg \{  K_1 \cos(\rho
a_1)\tilde\psi_N(\rho) +
    \sin(\rho a_1) \Big( \sum_{i=2} ^{I} K_i \Big)
\tilde\psi_D(\rho) \bigg \} +o(\rho^{-1})
\\ &\qquad
+\cos(\rho a_1)\tilde \psi_D(\rho)
 +
\rho^{-1} \bigg \{  K_1\sin(\rho a_1)\tilde\psi_D(\rho) + \cos(\rho
a_1) \Big ( \sum_{i=2} ^{I} K_i \Big) \tilde\psi_N(\rho) \bigg \}  +
o(1)
\\
 &= \psi_D(\rho) +
\rho^{-1}\bigg \{ K_1 \psi_N(\rho) + \Big (\sum_{i=2}^I  K_i \Big
)\psi_N(\rho) \bigg \} +o(\rho^{-1})
\\
 &= \psi_D(\rho) + \rho^{-1}\Big (\sum_{i=1}^I K_i
\Big)\psi_N(\rho) +o(\rho^{-1}).
\end{aligned}
\]
Hence the assertion holds for the whole tree. Thus by induction,
the proof is complete.
\end{proof}

\section{Approximation of eigenvalues}
 \hskip0.25in
 In this section we compute approximate eigenvalues
 for $\Gamma$ with $Q=\bfO$.
We begin with the following lemma.

\begin{lem}  \label{le:Dio}
There exist infinite sequences of natural numbers $\{ m_n \} $
and $\{ k_{i,n} \} $ $(i=1,\ldots,I)$ such that $m_n \to \infty$ as
$n \to \infty$ and
\[
 \Big | \frac{a_i}{L} -\frac{k_{i,n}}{m_n} \Big | <  m_n^{-1-1/I}
\]
 for all $i=1,2, \ldots, I$ and  $n=1,2,\ldots$.
\end{lem}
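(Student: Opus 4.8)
The plan is to prove this simultaneous Diophantine approximation result via a pigeonhole argument, which is the standard technique for establishing the existence of good rational approximations to a tuple of real numbers with a common denominator. The quantities to approximate are $a_i/L$ for $i=1,\ldots,I$; note that these are positive and satisfy $\sum_{i=1}^I (a_i/L)=1$, so they lie in the unit cube $[0,1]^I$ (indeed on the standard simplex).

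First I would fix a large integer $N$ and consider the $N+1$ points
\[
  P_j := \big( \{ j\, a_1/L \},\ \{ j\, a_2/L \},\ \ldots,\ \{ j\, a_I/L \} \big) \in [0,1)^I, \qquad j=0,1,\ldots,N,
\]
where $\{x\}$ denotes the fractional part of $x$. Next I would partition the cube $[0,1)^I$ into $N$ congruent subcubes of side length $1/N$ along each axis (there are $N^I$ of them). Since there are $N+1$ points but I want a collision, I would instead use $\lceil N^{1/I}\rceil$ subdivisions per axis so that the number of boxes is at most $N$, forcing two of the $N+1$ points, say $P_{j_1}$ and $P_{j_2}$ with $j_1<j_2$, to fall in the same box. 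Then every coordinate satisfies $|\{ j_2 a_i/L\}-\{ j_1 a_i/L\}|< N^{-1/I}$. Setting $m:=j_2-j_1$ (so $1\le m\le N$) and choosing $k_i$ to be the nearest integer to $m\, a_i/L$, the fractional-part difference yields $|m\, a_i/L - k_i|< N^{-1/I}$, hence
\[
 \Big| \frac{a_i}{L}-\frac{k_i}{m}\Big| < \frac{1}{m\, N^{1/I}} \le \frac{1}{m\cdot m^{1/I}} = m^{-1-1/I},
\]
using $m\le N$ so that $N^{1/I}\ge m^{1/I}$. This produces one good simultaneous approximation with denominator $m\le N$.

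To obtain an \emph{infinite} sequence with $m_n\to\infty$, I would iterate: given any bound, I must exclude the finitely many approximations already found and produce a strictly larger denominator. The clean way is to show that for each $n$ the construction can be forced to yield $m$ exceeding any prescribed threshold, which follows because at least one $a_i/L$ must be irrational (otherwise all $a_i/L$ are rational and the classical case of Carlson--Pivovarchik applies, but we may still run pigeonhole; in fact the argument produces arbitrarily large $m$ regardless). Concretely, if only finitely many distinct denominators $m$ arose from all choices of $N$, then for every sufficiently large $N$ the approximation $k_i/m$ would satisfy $|a_i/L-k_i/m|<N^{-1-1/I}\to 0$ with $m$ bounded, forcing $a_i/L=k_i/m$ to be rational with bounded denominator; handling the irrational coordinate gives a contradiction, so denominators are unbounded and I can extract a subsequence $m_n\to\infty$, relabeling the corresponding $k_{i,n}$. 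I can also ensure all the $k_{i,n}$ are natural numbers (nonnegative integers) since $a_i/L\in(0,1)$ makes the nearest integer to $m\, a_i/L$ nonnegative for $m\ge 1$.

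The main obstacle I anticipate is not the single-step pigeonhole inequality, which is routine, but rather establishing that the denominators $m_n$ genuinely tend to infinity rather than stabilizing. This requires ruling out the degenerate possibility that all good approximations share a bounded denominator, which hinges on an irrationality (or at least a ``not all simultaneously expressible with a single small denominator'') property of the tuple $\{a_i/L\}$. I would therefore pay careful attention to the logic of the extraction step, and to the bookkeeping that converts the box-side bound $N^{-1/I}$ into the stated bound $m_n^{-1-1/I}$ via the inequality $m_n\le N$.
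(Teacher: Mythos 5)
Your strategy is the same as the paper's: a pigeonhole (simultaneous Dirichlet approximation) argument on the unit cube, converting the box-side bound into $|a_i/L-k_i/m|<1/(m\,N^{1/I})\le m^{-1-1/I}$ via $m\le N$, followed by an argument that the denominators cannot remain bounded. However, two of your steps do not hold up as written.

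First, the counting in your pigeonhole step is wrong. With $\lceil N^{1/I}\rceil$ subdivisions per axis, the number of boxes is $\lceil N^{1/I}\rceil^{I}\ge N$, not ``at most $N$''; the inequality is strict unless $N$ is a perfect $I$-th power (for $I=3$, $N=9$ you get $27$ boxes for your $10$ points), so no collision is forced and the key inequality $|m\,a_i/L-k_i|<N^{-1/I}$ does not follow. The repair is to run the argument only for $N=n^{I}$, $n\in{\bf N}$: take $n$ subdivisions per axis and the $n^{I}+1$ points $j=0,1,\ldots,n^{I}$. That is exactly the paper's parametrization, and since the lemma only needs infinitely many values of $N$, restricting to perfect $I$-th powers costs nothing; but your proof needs this correction to be valid.

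Second, your argument that $m_n\to\infty$ genuinely requires an irrational coordinate, and your handling of the all-rational case is not a proof. If every $a_i/L$ is rational, the pigeonhole principle only asserts that \emph{some} colliding pair exists; it gives you no control over which pair, and the collisions it produces may all have the same bounded difference (e.g.\ for $a_1/L=1/2$ the argument may always return $m=2$). So the claim that ``the argument produces arbitrarily large $m$ regardless'' is unjustified, and invoking Carlson--Pivovarchik is out of place: their result is the Ambarzumyan theorem for rational trees, not this approximation lemma, which still needs a proof in that case. The paper dispatches the rational case in one line: write $a_i/L=q_i/p$ with a common denominator $p$ and set $m_n=np$, $k_{i,n}=nq_i$, so the approximation is exact and $m_n\to\infty$ by construction. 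Your contradiction argument in the case where some $a_i/L$ is irrational is sound (bounded denominators with errors tending to $0$ would force every $a_i/L$ to be rational) and matches the paper's reasoning; you just need the rational case treated separately as above.
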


\begin{proof}

If $a_i/L$ ($i=1,\ldots,I$) are all rational numbers, we can find natural numbers
$p$ and $q_i$ such that
\[
    \dfrac{a_i}{L} = \dfrac{q_i}{p}, \qquad i=1,2,\ldots,I.
\]
Then we may take
\[
  k_{i,n} = nq_i, \qquad m_n=np.
\]

Assume that not all of  $a_i/L$ ($i=1,\ldots,I$) are rational. We
consider the $I$-dimensional unit cube
\[
  \{ x=(x_1,x_2, \ldots,x_I) \in {\bf R}^I:  0 \leq x_i \leq 1
 \  \mbox{ for  } \ i=1,2, \ldots, I\},
\]
and divide it into  $n^I$ smaller cubes  with
edge length $1/n$.  We shall approximate the
vector $(a_1/L,\ldots,a_I/L)$ by a rational
point  in the unit cube as follows.
For each $p=0,1,2,\ldots, n^I$,
there exists $q_{i}\in {\bf N}$ such that
\[
 0 \leq p \frac{a_i}{L}-q_i  <  1.
\]
 Then by the pigeonhole principle, there exist vectors $U_1$ and
 $U_2$  of the form
\[
 U_j=(p_j\frac{a_1}{L} -q_{1,j},\; \ldots \; ,p_j\frac{a_I}{L}-q_{I,j}),\qquad j=1,2,
\]
that  fall into the same small cube.
Then for
 $m_n=p_2-p_1\leq n^I$ and $k_{i,n}=q_{i,2}-q_{i,1}$,  we have
 \begin{equation} \Big |
 m_n \frac{a_i}{L}- k_{i,n} \Big |  <  \frac{1}{n},\label{eq4.05}
 \end{equation}
so that
\[ \Big |
   \frac{a_i}{L}- \frac{k_{i,n}}{m_n}  \Big |  <  \frac{1}{m_n n}
\leq \frac{1}{m_n^{1+1/I}}.
\]
 The proof is complete since from (\ref{eq4.05}),
 $m_n\to\infty$ as $n\to\infty$.
\end{proof}

\bigskip

\begin{rem} \
\begin{description}
\item[\rm (a)]
Since
\[
 \Big |m_n- \sum_{i=1}^I k_{i,n} \Big | \leq
m_n  \sum_{i=1}^I \Big |   \frac{a_i}{L}- \dfrac{k_{i,n}}{m_n}  \Big |
 < \dfrac{I}{n},
\]
the equality
\begin{equation} \label{eq:mnkn}
  m_n=\sum_{i=1}^I k_{i,n}
\end{equation}
 holds  for all $n\geq I$.
  \item[\rm (b)]
  Using  $\sum_{i=1}^I (a_i/L) =1$, we may apply the
pigeonhole principle on the $(I-1)$-dimensional
hyperplane
\[
  \{ x=(x_1,x_2,\ldots,x_I ) \in {\bf R}^I : x_1+x_2+\cdots+x_I = 1 \}.
\]
Then we can improve the result
in Lemma~\ref{le:Dio} to
\[
 \Big | \frac{a_i}{L} -\frac{k_{i,n}}{m_n} \Big | =O( m_n^{-1-1/(I-1)}).
\]
  \end{description}
\end{rem}

\bigskip

 Let $\{m_n\}$ be the sequence given in Lemma~\ref{le:Dio}.  We define
 a sequence  $\{\mu_n\}$
 \[
 \mu_n:=\dfrac{2m_n\pi}L, \qquad n=1,2,\ldots,
 \]
and compute
the values of $\psi_N$, $\psi_D$, and
 their derivatives   at $\rho=\mu_n$ as follows.

\begin{lem} \label{le:psizero}
%
The characteristic functions
$\psi_N$ and $\psi_D$ have the following properties:
\begin{description}
\item[\rm (a)]
$\psi_N(\mu_n) = O(\mu_n^{-1/I})$ and
 $\displaystyle \frac{d\psi_N}{d\rho} (\mu_n)=
 L+
O(\mu_n^{-1/I})$  as $n \to \infty$.
\item[\rm (b)]
 $\psi_D(\mu_n) =
 1+ O(\mu_n^{-1/I})$ and
$\displaystyle \frac{d \psi_D}{d\rho}(\mu_n)= O(\mu_n^{-1/I}) $
  as $n \to \infty$.
\end{description}
\end{lem}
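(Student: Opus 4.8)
The plan is to reduce everything to a single Diophantine estimate and then propagate it through the recursive formulas of Lemma~\ref{le:recur2} by induction on the number of edges. The starting point is to observe that at $\rho=\mu_n=2m_n\pi/L$ the arguments $\mu_n a_i$ are nearly integer multiples of $2\pi$. Indeed, writing
\[
  \mu_n a_i = 2\pi\, m_n\frac{a_i}{L}
  = 2\pi k_{i,n} + 2\pi m_n\Big(\frac{a_i}{L}-\frac{k_{i,n}}{m_n}\Big),
\]
Lemma~\ref{le:Dio} gives $\big|m_n(a_i/L - k_{i,n}/m_n)\big| < m_n^{-1/I}$, and since $m_n = L\mu_n/(2\pi)$ this means $\mu_n a_i = 2\pi k_{i,n}+\theta_{i,n}$ with $|\theta_{i,n}| = O(\mu_n^{-1/I})$. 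Taylor expanding the trigonometric functions around the integer multiple of $2\pi$ then yields, for every edge $i=1,\dots,I$,
\[
  \cos(\mu_n a_i) = 1 + O(\mu_n^{-1/I}), \qquad
  \sin(\mu_n a_i) = O(\mu_n^{-1/I}),
\]
and likewise the $\rho$-derivatives $-a_i\sin(\rho a_i)$ and $a_i\cos(\rho a_i)$ evaluated at $\mu_n$ are $O(\mu_n^{-1/I})$ and $a_i + O(\mu_n^{-1/I})$, respectively.

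Next I would prove, by induction on the number of edges of the (sub)tree, the stronger statement that for every subtree $\Gamma'$ arising in the recursive construction, with total length $L'$, one has $\psi_N(\mu_n) = O(\mu_n^{-1/I})$, $\psi_N'(\mu_n) = L' + O(\mu_n^{-1/I})$, $\psi_D(\mu_n) = 1 + O(\mu_n^{-1/I})$, and $\psi_D'(\mu_n) = O(\mu_n^{-1/I})$. For a single edge $\gamma_i$ this is immediate from $\tilde\psi_N=\sin(\rho a_i)$, $\tilde\psi_D=\cos(\rho a_i)$ and the estimates above, with $L'=a_i$. The subtrees have fewer than $I$ edges, but the exponent $1/I$ in the error is \emph{global}: since every argument $\mu_n a_i$ that appears is controlled by the single approximation of Lemma~\ref{le:Dio}, the same bound $O(\mu_n^{-1/I})$ is available at every step, so no deterioration of the exponent occurs through the recursion.

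For the inductive step I would substitute the four estimates for $\tilde\psi_N,\tilde\psi_D$ and their derivatives into Lemma~\ref{le:recur2} and its termwise derivative. In
\[
  \psi_N(\mu_n) = \cos(\mu_n a_1)\tilde\psi_N(\mu_n) + \sin(\mu_n a_1)\tilde\psi_D(\mu_n)
\]
both terms are a factor $O(\mu_n^{-1/I})$ times a bounded factor, giving $\psi_N(\mu_n)=O(\mu_n^{-1/I})$; in $\psi_D(\mu_n) = -\sin(\mu_n a_1)\tilde\psi_N(\mu_n)+\cos(\mu_n a_1)\tilde\psi_D(\mu_n)$ the dominant contribution is $\cos(\mu_n a_1)\tilde\psi_D(\mu_n) = 1 + O(\mu_n^{-1/I})$. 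Differentiating the recursion and collecting terms, the surviving main term in $\psi_N'(\mu_n)$ is $a_1\cos(\mu_n a_1)\tilde\psi_D(\mu_n) + \cos(\mu_n a_1)\tilde\psi_N'(\mu_n) = a_1 + \tilde L + O(\mu_n^{-1/I}) = L' + O(\mu_n^{-1/I})$, where $\tilde L=L'-a_1$ is the length of the subtree with $\gamma_1$ removed; meanwhile every term in $\psi_D'(\mu_n)$ carries at least one small factor $\sin(\mu_n a_1)$, $\tilde\psi_N(\mu_n)$, or $\tilde\psi_D'(\mu_n)$, so $\psi_D'(\mu_n)=O(\mu_n^{-1/I})$.

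The one genuine subtlety, and the step I expect to require the most care, is the product structure of $\tilde\psi_D$ when the junction $\bfv_2$ has degree greater than two: there $\tilde\psi_D$ is a product of the Dirichlet--Neumann characteristic functions of the several subtrees left after removing $\gamma_1$. I would handle this by observing that each factor is, by the inductive hypothesis, $1+O(\mu_n^{-1/I})$ with derivative $O(\mu_n^{-1/I})$; a product of boundedly many such factors is again $1+O(\mu_n^{-1/I})$, and by the Leibniz rule its derivative is a sum of boundedly many terms each containing exactly one small factor, hence $O(\mu_n^{-1/I})$. Applying the induction to the full tree, where $L'=L$, then gives exactly the four assertions of the lemma.
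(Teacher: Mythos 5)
Your proposal is correct and follows essentially the same route as the paper: the Diophantine estimate of Lemma~\ref{le:Dio} gives $\cos(\mu_n a_i)=1+O(\mu_n^{-1/I})$ and $\sin(\mu_n a_i)=O(\mu_n^{-1/I})$ for every edge, and these are then propagated through the recursive formulas of Lemma~\ref{le:recur2} down to the single-edge functions (\ref{eq:inipsi}), which is exactly what the paper does, phrasing the induction informally as ``using these equalities repeatedly.'' Your explicit strengthened inductive hypothesis (with the subtree length $L'$ appearing in the derivative estimate) and your Leibniz-rule treatment of the product structure of $\tilde\psi_D$ at vertices of degree greater than two simply make rigorous steps that the paper leaves implicit.
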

\begin{proof}
By Lemma~\ref{le:recur2} and
\[
   \mu_n a_i = \frac{m_n\pi}{L}a_i = 2 k_{i,n}  \pi + O(\mu_n^{-1/I}),
\]
we have
\[
   \psi_N(\mu_n )=  \cos(\mu_n  a_1) \tilde \psi_N(\mu_n )
+ \sin(\mu_n  a_1) \tilde\psi_D(\mu_n ) =
\tilde \psi_N(\mu_n )+ O(\mu_n^{-1/I})\tilde\psi_D(\mu_n ),
\]
and
 \[
  \psi_D(\mu_n) = - \sin(\mu_n a_1) \tilde \psi_N(\mu_n)
+\cos(\mu_n a_1) \tilde\psi_D(\mu_n)
 =
 \tilde\psi_D(\mu_n) + O(\mu_n^{-1/I})\tilde \psi_N(\mu_n).
\]
Using these equalities repeatedly on (\ref{eq:inipsi}), we obtain
\[
 \psi_N(\mu_n) = O(\mu_n^{-1/I}), \qquad  \tilde \psi_N(\mu_n)
 = O(\mu_n^{-1/I}),
\]
and
\[
 \psi_D(\mu_n) = 1 +  O(\mu_n^{-1/I}), \qquad
 \tilde \psi_D(\mu_n) = 1 +  O(\mu_n^{-1/I}).
\]

 Also, we have
 \[
 \begin{aligned}
  \frac{d\psi_N}{d\rho} (\mu_n)
&= \frac{d}{d\rho}  \Big \{ \cos(\rho a_1) \tilde \psi_N + \sin(\rho
a_1) \tilde\psi_D \Big \}  (\mu_n)
\\
&= -a_1 \sin(\mu_n  a_1) \tilde \psi_N ( \mu_n) + \cos( \mu_na_1)
\frac{d \tilde \psi_N }{d\rho} (\mu_n)
\\ & \qquad
+  a_1 \cos( \mu_n a_1) \tilde\psi_D( \mu_n)
+ \sin( \mu_n a_1) \frac{ d\tilde\psi_D }{d\rho}  (\mu_n)
\\
 &=
%
 \frac{d \tilde \psi_N }{d\rho} (\mu_n)
 + a_1
 + O(\mu_n^{-1/I})
\end{aligned}
\]
and
 \[
 \begin{aligned}
  \frac{d\psi_D}{d\rho}(\mu_n)
&= \frac{d}{d\rho} \Big \{ - \sin(\rho  a_1) \tilde \psi_N +
\cos(\rho a_1) \tilde\psi_D \Big \} (\mu_n)
\\
&=-  a_1 \cos(\mu_n a_1) \tilde \psi_N (\mu_n) - \sin(\mu_n a_1)
\frac{d\tilde \psi_N}{d\rho}  (\mu_n)
\\ & \qquad
- a_1 \sin(\mu_n a_1) \tilde\psi_D(\mu_n)
+\cos(\mu_n a_1) \frac{ d\tilde\psi_D}{d\rho}  (\mu_n)
\\
 &= \frac{ d\tilde\psi_D}{d\rho}  (\mu_n)+O(\mu_n^{-1/I}).
\end{aligned}
\]
Using these  equalities repeatedly and  (\ref{eq:inipsi}),   we obtain
\[
 \frac{d\psi_N}{d\rho} (\mu_n) = \sum_{i=1}^I a_i  + O(\mu_n^{-1/I})
\]
and
\[
  \frac{d\psi_D}{d\rho}(\mu_n) =  O(\mu_n^{-1/I}).
\]
The proof is complete.
\end{proof}

As an immediate consequence of Lemma~\ref{le:psizero}, we have
the following result concerning  the location of zeros of $\psi_N$.

\begin{lem} \label{le:eigen}
There
exists a sequence of positive numbers $\{ \rho_n \} $ such that
$\psi_N(\rho_n)=0$ for  $n=1,2,\ldots$
and  $\rho_n=\mu_n+O(\mu_n^{-1/I})$  as $n\to\infty$.
\end{lem}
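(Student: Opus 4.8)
The plan is to locate each zero $\rho_n$ by a quantitative mean-value argument anchored at $\mu_n$, using the two estimates furnished by Lemma~\ref{le:psizero}: the value $\psi_N(\mu_n)$ is small, namely $O(\mu_n^{-1/I})$, while the derivative $\psi_N'(\mu_n)=L+O(\mu_n^{-1/I})$ is bounded away from zero for large $n$. Intuitively, $\psi_N$ nearly vanishes at $\mu_n$ and has slope close to $L>0$ there, so it must cross zero within a horizontal distance comparable to $\psi_N(\mu_n)/\psi_N'(\mu_n)=O(\mu_n^{-1/I})$. Turning this heuristic into a rigorous estimate is the content of the proof.

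First I would record the structural fact that, for $Q=\bfO$, the recursive formulas of Lemma~\ref{le:recur2} together with the initialization (\ref{eq:inipsi}) express $\psi_N(\rho)$ as a finite sum of products of the bounded functions $\cos(\rho a_i)$ and $\sin(\rho a_i)$, the number of terms depending only on the fixed tree $\Gamma$. Consequently $\psi_N$ is a trigonometric polynomial whose derivatives of every order are uniformly bounded on the real line; in particular there is a constant $M$, independent of $n$, with $|\psi_N''(\rho)|\le M$ for all $\rho$. This is the ingredient that upgrades the pointwise derivative estimate at $\mu_n$ to control of $\psi_N'$ on a whole neighbourhood: since $\psi_N'$ is Lipschitz with constant $M$, for $|\rho-\mu_n|\le\delta_n$ one has $\psi_N'(\rho)=L+O(\mu_n^{-1/I})+O(\delta_n)$.

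Next I would fix the window. Writing $|\psi_N(\mu_n)|\le A\mu_n^{-1/I}$, choose $\delta_n:=c\,\mu_n^{-1/I}$ with $c>2A/L$. For $n$ large, the estimate above gives $\psi_N'(\rho)\ge L/2>0$ on the interval $[\mu_n-\delta_n,\ \mu_n+\delta_n]$, so $\psi_N$ is strictly increasing there. By the mean value theorem, $\psi_N(\mu_n+\delta_n)=\psi_N(\mu_n)+\psi_N'(\eta)\,\delta_n$ for some $\eta$ in the window, whence $\psi_N(\mu_n+\delta_n)\ge -A\mu_n^{-1/I}+(L/2)\delta_n=(Lc/2-A)\mu_n^{-1/I}>0$, and similarly $\psi_N(\mu_n-\delta_n)<0$, for all large $n$. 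By the intermediate value theorem there is a zero $\rho_n\in(\mu_n-\delta_n,\ \mu_n+\delta_n)$, unique by the monotonicity, with $\psi_N(\rho_n)=0$ and $|\rho_n-\mu_n|<\delta_n=O(\mu_n^{-1/I})$, which is the asserted estimate. Since $\mu_n\to\infty$ while $\delta_n\to 0$, we have $\rho_n>0$ for all large $n$; discarding finitely many indices yields the desired infinite sequence of positive zeros.

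The main obstacle, as is typical in such perturbative root-finding, is not the intermediate value step but ensuring that the slope estimate $\psi_N'\approx L$ holds uniformly across the full width $\delta_n$ of the window rather than merely at the single point $\mu_n$. This is exactly what the uniform second-derivative bound $M$ secures, so establishing that bound from the trigonometric-polynomial structure of $\psi_N$ is the step I would treat most carefully; the remainder is a routine one-variable calculus estimate.
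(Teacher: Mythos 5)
Your proposal is correct and follows essentially the same route as the paper: both use Lemma~\ref{le:psizero} for the smallness of $\psi_N(\mu_n)$ and the slope $\approx L$, the trigonometric-polynomial structure of $\psi_N$ (via Lemma~\ref{le:recur2} and (\ref{eq:inipsi})) to get a uniform second-derivative bound, and then a monotonicity/intermediate-value argument to locate a zero within $O(\mu_n^{-1/I})$ of $\mu_n$. The only cosmetic difference is that you work on a shrinking window of width $c\,\mu_n^{-1/I}$, while the paper fixes an $n$-independent window $(\mu_n-\delta,\mu_n+\delta)$ on which $\psi_N'>L/2$; the substance is identical.
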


\begin{proof}
 By Lemma~\ref{le:psizero}, there exists $M>0$ independent of $n$  such that
$|\psi_N(\mu_n)|\leq M\mu_n^{-1/I}$ for all large $n$.
On the other hand,
by (\ref{eq:inipsi}) and Lemma~\ref{le:recur2},
  $\psi_N (\rho) $ and  $\psi_D (\rho) $ must be some polynomials of
$\sin(\rho a_i)$ and $\cos(\rho a_i)$ ($i=1,2,\ldots, I$) with
constant coefficients.  This implies that
$ |  (d^2/d\rho^2) \psi_N  |$
is uniformly bounded in $\rho>0$.  Hence by
Lemma~\ref{le:psizero}, there exists $\delta>0$ independent of $n$ such that
\[
  \frac{d}{d\rho} \psi_N(\rho)     >  \dfrac{L}{2} \quad \mbox{ for }
  \rho \in (\mu_n - \delta, \mu_n + \delta).
\]
Therefore $\psi_N(\rho)$ must vanish at some $\rho_n$  such that
\[
  |\rho_n -\mu_n|\leq (2M/L) \mu_n^{-1/I}.
  \]
    This completes the proof.
\end{proof}

\section{Proof of Theorem~\ref{th:Ambar}}
 \hskip0.25in
In this section we complete the proof of Theorem~\ref{th:Ambar}.
By Lemma~\ref{le:expand}, $\varphi_N$  satisfies
\[
  \varphi_N(\rho)=\rho\psi_N(\rho) - \Big (
 \displaystyle \sum_{i=1}^{I} K_i \Big ) \psi_D(\rho)
    +o(1) \quad \mbox{ as } \rho \to \infty.
\]
Here, setting $\rho=\rho_n$, we have
$\varphi_N(\rho_n)=0=\psi_N(\rho_n)$ by $  \sigma(Q) =
\sigma(0)$. Also, by Lemmas~\ref{le:psizero} (b) and
\ref{le:eigen},  we have
\[
    \psi_D(\rho_n) = 1 +O(\rho_n^{-1/I}) \quad \mbox{ as } n \to \infty.
\]
Thus we obtain
\[
\Big (
 \displaystyle \sum_{i=1}^{I} K_i \Big ) \Big \{  1 +O(\rho_n^{-1/I}) \Big
\}
    +o(1) = 0.
\]
Letting $n \to \infty$, we conclude
\begin{equation}  \label{eq:Qi}
   \sum_{i=1}^{I} K_i = \frac{1}{2} \sum_{i=1}^I \int_0^{a_i} q_i(x) dx =0.
\end{equation}

On the other hand, since  $\lambda=0$ is the first eigenvalue,
it follows from the variational principle that the inequality
\[
   \frac{ \displaystyle \sum_{i=1}^I
 \int_0^{a_i}  \{ (y_i')^2+q_i(x)  (y_i)^2 \} \,  dx}
{ \displaystyle \sum_{i=1}^I  \int_0^{a_i}  (y_i)^2\, dx} \geq 0
\]
holds for any test function in $H^1(\Gamma)$.   By (\ref{eq:Qi}),
the infimum is attained by $y_i(x)=1$ for $i=1,\ldots,I$. This
implies that the constant vector $(1,\ldots,1)$ is an eigenfunction
associated with $\lambda=0$.  Then by simple substitution, we
conclude that $q_i = 0$  a.e. on $[0,a_i]$, $i=1,2,\ldots,I$.
Thus the proof is complete.  \qed


\section*{Acknowledgments}

E.\ Yanagida was supported in part by the Grant-in-Aid for
Scientific Research (A)  (No. 19204014) from the Japan Society for
the Promotion of Science. C.K.\ Law was supported in part by
National Science Council, Taiwan under grant number NSC
97-2115-M-110-005-MY2. Law would like to thank the organizers of the
Analysis on Graphs - Follow-up Workshop held at Isaac-Newton
Institute of Mathematical Sciences (July 26-30th 2010) for their
hospitality. The workshop made an impact on the final form of this
paper.


\end{document}